\documentclass[11pt]{article}
\usepackage{amsmath, amsthm, amssymb}
\usepackage{geometry}
\usepackage{hyperref}
\usepackage{microtype}
\usepackage{enumitem}
\usepackage{tikz}
\usepackage{pgfplots}
\usepackage{booktabs, siunitx}
\pgfplotsset{compat=1.18}
\usepackage{graphicx}
\usepackage{float}
 
\newtheorem{theorem}{Theorem}

\theoremstyle{definition}

\title{\textbf{$L^p$ Asymptotics of the Möbius Energy Density of Helix Curves}}
\author{Yash Tiwari}
\date{}
 
\begin{document}
 
\maketitle

\begin{abstract}
Motivated by the recent work of Lipton \cite{lipton2026} on the M\"obius 
energy of helix curves, we extend the study to the $L^p$ asymptotics of 
the meromorphic family
\[
M_\rho(t) = \frac{\rho^2+1}{\rho^2 t^2 + 4 \sin^2(t/2)} - \frac{1}{t^2}.
\]
The helix has infinite M\"obius energy, but the arclength-rescaled energy 
density is finite. As $\rho \to 0$ the helix coils infinitely tight. 
Using contour integration and a careful Laurent expansion near the poles, 
we establish
\[
I_p(\rho) := \left(\int_{-\infty}^\infty M_\rho(t)^p \, dt\right)^{1/p} 
\sim C_p \, \rho^{-(2-1/p)}
\]
for integer $p > 1$, extended to real $p > 1$, where $C_p$ is an explicit 
constant involving $\zeta(2p-1)$. The result gives the precise $L^p$ 
blowup rate of the M\"obius energy density as the pitch $\rho \to 0$. 
The borderline case $p=1$ yields a logarithmic correction 
$I_1(\rho) \sim \log(1/\rho)/\rho$, recovering Lipton's main theorem. 
We derive a quantitative coiling barrier. Numerical verification confirms the 
scaling exponent to high precision.
\end{abstract}

\tableofcontents
\bigskip
 
%------------------------------------------------------------
\section{Setup and Main Result}
 
\subsection{Geometric Motivation}
 
This work extends the recent analysis of the M\"obius energy of helix 
curves carried out by Lipton \cite{lipton2026}. Let $j, p \geq 1$, $I$ be an interval or a circle and let $\gamma : I \to \mathbb{R}^n$ parametrize a $C^2$ curve. For $s\in I$, define the
pointwise energy, also referred to as the energy element or energy density, as
\[
E^{j,p}(\gamma,s) = \int_{I}\left( \frac{1}{|\gamma(t)-\gamma(s) |^j} - \frac{1}{D(\gamma(t) , \gamma(s))^j}\right)^p |\dot{\gamma(t)}|dt
\]
Where D denote the intrinsic distance along $\gamma$ , The main topic of our study is the $(j,p)$ O'Hara energy of $\gamma$ is given by
\[
E^{j,p}(\gamma,s) = \int_{I}E^{j,p}(\gamma,s)|\dot{\gamma(s)}|ds
\]
For our choice of study to extend the result for the helix parametrized with pitch $\rho$ ,  $H_{\rho}(t)$ = $(e^{it},\rho t)$. By rescaling according to the arclength element \cite{lipton2026} we have 
\[
E^{2,p} = \int_{-\infty}^{\infty} (M_{\rho}(t))^p dt
\]
The normalized $L^p$ energy would be 
\[
I_p(\rho) = (E^{2,p})^{\frac{1}{p}}=\left(\int_{-\infty}^{\infty} (M_{\rho}(t))^p dt \right)^{\frac{1}{p}}
\]
Our setup is to find the asymptotics for $\rho \to 0$. Which leads us to the question of energy blow up , further we give a precise blowup profile of the $L^p$ energy with the concentration of the pitch.
 
\subsection{The Function and Main Theorem}
 
Define the one-parameter family of functions $M_\rho : \mathbb{R} \to \mathbb{R}$ by
\begin{equation}\label{eq:Mrho}
  M_\rho(t) = \frac{\rho^2 + 1}{\rho^2 t^2 + 4\sin^2\!\tfrac{t}{2}} - \frac{1}{t^2}, \qquad \rho > 0.
\end{equation}
The apparent singularity at $t = 0$ is removable: expanding both terms in Taylor series shows $M_\rho(t) \to 0$ as $t \to 0$. For $t \neq 0$ the function is smooth and positive.
 
Our primary object of study is the $L^p$ norm
\begin{equation}\label{eq:Ip}
  I_p(\rho) = \left(\int_{-\infty}^{\infty} M_\rho(t)^p \, dt\right)^{1/p}, \qquad p > 1, \; p \in \mathbb{Z}.
\end{equation}
 
\begin{theorem}[Main Result]\label{thm:main}
For every  $p > 1$, as $\rho \to 0^+$,
\[
  I_p(\rho) \sim C_p \, \rho^{-(2 - 1/p)},
\]
where
\[
  C_p = \left(\binom{2p-2}{p-1}
\frac{1}{(4\pi)^{2p-2}}
\zeta(2p-1)\right)^{\frac{1}{p}}
\]
and $\zeta$ denotes the Riemann zeta function.
\end{theorem}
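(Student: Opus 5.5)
The proof localizes the integral of $M_\rho^p$ near the near-poles $t_k = 2\pi k$, $k \neq 0$, where $4\sin^2(t/2)$ vanishes. Each window contributes a rescaled Lorentzian-type integral. Summing over $k$ produces $\zeta(2p-1)$, and every other piece is of lower order.

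\textbf{Step 1 (local model).} Write $t = t_k + u$ with $|u| \le \pi$. Then $4\sin^2(t/2) = 4\sin^2(u/2) = u^2(1+O(u^2))$ and $\rho^2 t^2 = \rho^2 t_k^2(1+O(u/t_k))$. Put $a_k = 2\pi|k|\rho$. Near $u=0$,
\[
M_\rho(t) = \frac{1+O(\rho^2)}{a_k^2 + u^2 + O(u^4) + O(\rho^2|t_k u|)} - \frac{1}{t^2},
\]
and the term $1/t^2 \le 1/(\pi^2(2|k|-1)^2)$ is harmless. The exact model integral is
\[
\int_{\mathbb{R}} \frac{du}{(a^2+u^2)^p} = a^{1-2p}\,\frac{\sqrt{\pi}\,\Gamma(p-\tfrac12)}{\Gamma(p)} = a^{1-2p}\,\frac{\pi}{4^{p-1}}\binom{2p-2}{p-1}.
\]
This can be obtained by residues at $u = ia$, consistent with the paper's contour approach, or from the Beta integral. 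Summing over $k \neq 0$ gives
\[
2\zeta(2p-1)\,(2\pi\rho)^{1-2p}\,\frac{\pi}{4^{p-1}}\binom{2p-2}{p-1} = \binom{2p-2}{p-1}\frac{\zeta(2p-1)}{(4\pi)^{2p-2}}\,\rho^{1-2p},
\]
using $(2\pi)^{2p-2}4^{p-1} = (4\pi)^{2p-2}$. So $I_p(\rho)^p \sim C_p^p\rho^{1-2p}$, and taking $p$-th roots gives the theorem.

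\textbf{Step 2 (justifying the window approximation).} For fixed $k$, substitute $u = a_k v$. The window integral times $a_k^{2p-1}$ equals $\int (1+v^2)^{-p}\,dv + o(1)$ as $\rho \to 0$, by dominated convergence with dominant $C(1+v^2)^{-p}$. This dominant is valid because $4\sin^2(u/2) \ge (2u/\pi)^2$ on $|u|\le\pi$, so the denominator is at least $c(a_k^2+u^2)$ there. To exchange the limit with the sum over $k$, we use the same lower bound for a uniform estimate: each window is bounded by $C a_k^{1-2p}$, and $\sum_k a_k^{1-2p}\rho^{2p-1} = (2\pi)^{1-2p}\cdot 2\zeta(2p-1)$ converges because $p>1$. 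This gives a dominated sum, so the $k$-sum of the limits is the limit of the sums.

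\textbf{Step 3 (remainder).} The window around $t=0$ contributes $O(1)$, since $M_\rho$ is bounded there uniformly in $\rho$. The negative correction $-1/t^2$ and the factor $\rho^2+1$ change each window by relative $O(\rho^2+a_k^{2}/k^2)$, which is $o(1)$ uniformly. Cruder regions, such as a neighbourhood where $|u|$ is not small relative to the window, are bounded by $\int (\rho^2 t^2 + c)^{-p}\,dt = O(\rho^{-1})$. This is $o(\rho^{1-2p})$ because $2p-1>1$. Only these bounds on $M_\rho$ are needed, so the argument applies to real $p>1$ as well as integer $p$.

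\textbf{Main obstacle.} The delicate point is the uniformity over $k$ when $a_k$ is not small, i.e.\ $|k| \gtrsim 1/\rho$. There the peaks are no longer sharp and the local expansion $4\sin^2(u/2)\approx u^2$ is not accurate at scale $a_k$. I would handle this by splitting the sum at $|k| = \rho^{-1/2}$. For $|k| \le \rho^{-1/2}$ the model is accurate uniformly, because $a_k \to 0$. For $|k| > \rho^{-1/2}$ I would use only the crude bound $M_\rho \lesssim (\rho^2 t^2 + 4\sin^2(t/2))^{-1}$. This tail is $\lesssim \rho^{1-2p}\sum_{|k|>\rho^{-1/2}} k^{1-2p} = o(\rho^{1-2p})$, plus the $O(\rho^{-1})$ smooth-region bound.
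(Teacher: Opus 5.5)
Your proof is correct, but it takes a different route from the paper's.

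For integer $p$, the paper closes a semicircular contour in the upper half-plane and approximates the poles by $z_n=2n\pi(1\pm i\rho)$. It keeps only the leading factor $w(w+4\pi n i\rho)$ of the denominator, which gives the residue $(-1)^{p-1}\binom{2p-2}{p-1}(4\pi n i\rho)^{1-2p}$ for each order-$p$ pole. It then doubles this for the mirror branch and sums to get $\zeta(2p-1)$. Non-integer $p$ is handled separately: H\"older interpolation between $\lfloor p\rfloor$ and $\lceil p\rceil$ gives the upper bound, and a peak-localization argument gives the lower bound.

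You stay on the real line throughout. The windows $[2\pi k-\pi,2\pi k+\pi]$ partition $\mathbb{R}$. A single uniform bound, $M_\rho\le C/(a_k^2+u^2)$ (from Jordan's inequality and $|t|\ge\pi|k|$ on the window), does two jobs at once. It is the dominating function for the rescaled integral at fixed $k$, and it gives the summable majorant $C(2\pi|k|)^{1-2p}$ for Tannery's theorem in $k$. In effect you take the localization the paper uses only for its real-$p$ lower bound and make it two-sided.

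What your route buys:
\begin{itemize}
\item It gives the sharp constant $C_p$ for every real $p>1$ at once. The paper's interpolation only yields the constant $C_{p_0}^{1-\theta}C_{p_1}^{\theta}$, so for non-integer $p$ it establishes only the order of growth.
\item It avoids two things the residue argument leaves uncontrolled. The true poles drift away from $2n\pi(1\pm i\rho)$ once $n\rho\gtrsim 1$. The neglected higher Laurent terms also enter the residue of an order-$p$ pole.
\end{itemize}
The contour method's advantage is that, for integer $p$, the binomial coefficient drops out of a single residue computation. It is also the natural framework if one wants exact formulas or correction terms.

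Finally, your ``main obstacle'' is already resolved by Step~2. The bound $Ca_k^{1-2p}$ on each window integral holds for every $k$, not only when $a_k$ is small. So you can drop the split at $|k|=\rho^{-1/2}$ and the informal relative-error claims in Step~3. All that remains is the $k=0$ window, where $M_\rho$ is bounded uniformly in $\rho$.
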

 
The proof occupies Sections~\ref{sec:poles}--\ref{sec:final}.
 
%------------------------------------------------------------
\section{Analytic Continuation and Pole Structure}\label{sec:poles}
 
Throughout Sections~2--5 we assume $p \in \mathbb{Z}$ with $p > 1$. 
The extension to real $p > 1$ is treated in Section~7. The poles of
\[
  f(z) := M_\rho(z)
\]
occur where the denominator $\rho^2 z^2 + 4\sin^2\!\tfrac{z}{2}$ vanishes.
 
\subsection{Finding the Poles}
 
At $\rho = 0$ the zeros of the denominator are exactly $z = 2n\pi$, $n \in \mathbb{Z}$. For small $\rho > 0$ we seek perturbed zeros of the form $z_n = 2n\pi + \varepsilon$ with $|\varepsilon| \ll 1$.
 
Substituting and expanding $\sin\!\tfrac{2n\pi + \varepsilon}{2} = \sin\!\tfrac{\varepsilon}{2}$:
\[
  \rho^2(2n\pi + \varepsilon)^2 + 4\sin^2\!\frac{\varepsilon}{2} \approx \rho^2 z_n^2 + \varepsilon^2 = 0,
\]
which gives $\varepsilon = \pm i\rho z_n$. To leading order $z_n \approx 2n\pi$, so
\begin{figure}[h]
    \centering
    \includegraphics[width=0.5\linewidth]{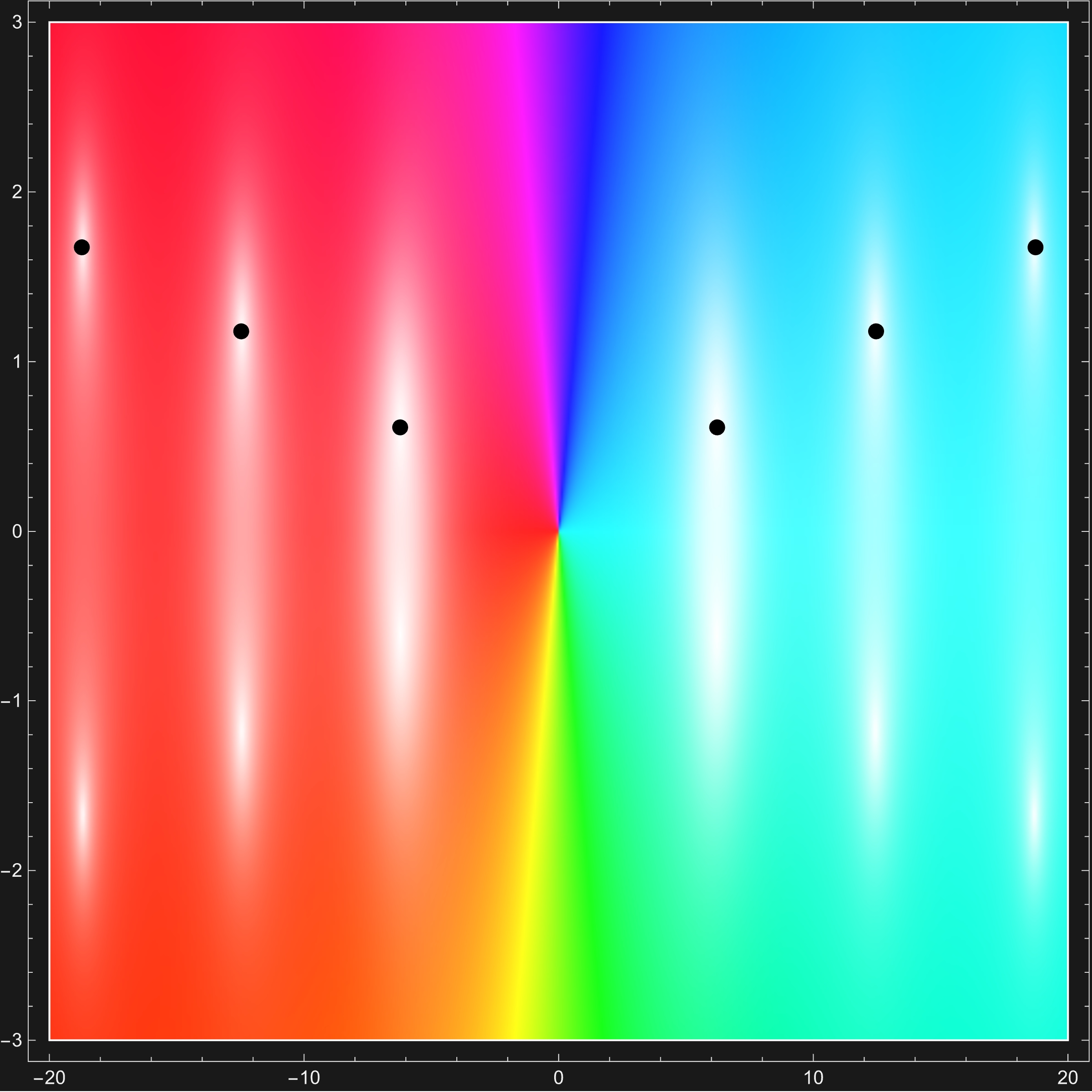}
    \caption{plot of $M_\rho(z)$ showing poles in the complex plane, 
located near $z_n = 2n\pi(1 \pm i\rho)$ for small $\rho$.}
    \label{fig:placeholder}
\end{figure}
\begin{equation}\label{eq:poles}
  z_n = 2n\pi(1 \pm i\rho), \qquad n \in \mathbb{Z} \setminus \{0\}.
\end{equation}

The poles with $\operatorname{Im}(z_n) > 0$ (upper half-plane) are $z_{n^+} = 2n\pi(1+i\rho)$ for $n > 0$ is the right branch of poles and $z_{n^-} = 2n\pi(1-i\rho)$ ,  $n < 0$ is the other branch.  We can make the sum symmetric by saying $z_{n^-} = - 2 n \pi(1-i \rho)$ for $n > 0$.  the contribution to the residue from these poles would be the same as the poles $z_{n^+}$.

The expansion is valid for $n\rho \ll 1$, which corresponds to the dominant 
range $n \lesssim 1/\rho$. Contributions from $n \gtrsim 1/\rho$ are 
controlled by the convergence of $\sum n^{-(2p-1)}$ for $p > 1$ and contribute 
at order $O(\rho^{2p-1})$, which is subleading. We will use this fact later 
in the proof.
 
%------------------------------------------------------------
\section{Contour Integration}\label{sec:contour}
 
\subsection{Reduction to Residues}
\begin{center}
\begin{tikzpicture}[scale=2]

% Axes
\draw[->] (-2,0) -- (2,0) node[right] {$\Re z$};
\draw[->] (0,-0.5) -- (0,2) node[above] {$\Im z$};

% Real axis segment
\draw[thick] (-1.5,0) -- (1.5,0);

% Arrow on real axis (left to right)
\draw[->, thick] (-0.8,0) -- (0.8,0);

% Semicircle
\draw[thick] (1.5,0) arc (0:180:1.5);

% Arrow on semicircle (counterclockwise)
\draw[->, thick] (0.7,1.32) arc (60:65:1.5);

% Labels
\node at (1.5,-0.2) {$R$};
\node at (-1.5,-0.2) {$-R$};
\node at (0.2,1.6) {$C_R$};

\end{tikzpicture}
\end{center}
 
Let $C_R$ denote the semicircular contour consisting of the segment $[-R,R]$ on the real axis and the arc $\gamma_2 : Re^{i\theta}$, $\theta \in [0,\pi]$. By the residue theorem,
\[
  \int_{C_R} f(z)^p\,dz = 2\pi i \sum_{\substack{n \geq 1 \\ |z_n| < R}} \operatorname{Res}_{z_n} f^p.
\]
 
\subsection{Contour Definition}
 The contour consists of two paths $\gamma_1 : t \; , \; t\in [-R , R]$ and $\gamma_2 : R e^{i\theta} , \; \theta\in [0,\pi]$.

\[
 \int_{C_R}(f(z))^p dz =  \int_{-R}^{R}\left(\frac{\rho^2 + 1}{\rho^2 t^2+ 4 \sin^2(\frac{t}{2})} - \frac{1}{t^2} \right)^p dt 
+ 
\]
\[ \int_{0}^{\pi} i\left(\frac{\rho^2 + 1}{\rho^2 R^2 e^{2i \theta}+ 4 \sin^2\left(\frac{Re^{i\theta}}{2}\right)} - \frac{1}{R^2 e^{2i\theta}}\right)^p R e^{i\theta} d\theta\]
\[
 \int_{C_R}f(z)^p\; dz = I_1+I_2
\]

\[ \int_{C_R}f(z)^p \; dz = \int_{\gamma_1}f(z)^p \; dz \; + \; \int_{\gamma_2}f(z)^p \; dz\]

We know that $| z_n| < R$ , $2 n \pi(1 + i \rho) < R$
\[ n < \left|\frac{R}{2\pi(1+i\rho)}\right|\]
for $\rho\to 0$ from the Archimedean principle we have a natural number $N < \frac{R}{2\pi}  \; , \; N \in\mathbb{N}$
\[\int_{\gamma_1}f(z)^p \; dz +\int_{\gamma_2} f(z)^p 
\;dz= 2\pi i  \sum_{n = 1}^{N} Res_{z_{n^+}}f^p\]
We have defined $R = (2 N + 1)\pi$ , the $N^{th}$ pole $z_N = 2\pi N(1+i\rho)$ has the absolute value $|z_N| = 2 \pi N(1+\rho^2)$ , $R > |z_N|$ means all the poles will lie inside the contour. Taking limit ,
\[\lim_{R \to \infty}\left(\int_{\gamma_1}f(z)^p \; dz +\int_{\gamma_2} f(z)^p 
\;
dz\right) = \lim_{R \to \infty}2 \pi i \sum_{n = 1}^{N} Res_{z_{n^+}}f^p\]
\[\lim_{R \to \infty}\int_{-R}^{R}\left(\frac{\rho^2 + 1}{\rho^2 z^2+ 4 \sin^2(\frac{z}{2})} - \frac{1}{z^2} \right)^p \; dz\]
 \[\lim_{R \to \infty}\int_{\gamma_1}f(z)^p \; dz =\int_{-\infty}^{\infty}\left(\frac{\rho^2 + 1}{\rho^2 z^2+ 4 \sin^2(\frac{z}{2})} - \frac{1}{z^2} \right)^p \; dz \]
 \[\lim_{R \to \infty}\int_{0}^{\pi} i\left(\frac{\rho^2 + 1}{\rho^2 R^2 e^{2i \theta}+ 4 \sin^2\left(\frac{Re^{i\theta}}{2}\right)} - \frac{1}{R^2 e^{2i\theta}}\right)^p R e^{i\theta} d\theta\] \[\lim_{R \to \infty}\left|\int_{\gamma_2} f(z)^p 
\;
dz \right | \leq  \lim_{R \to \infty}\sup_{\theta\in[0,\pi]}|(f(Re^{i\theta})^p|. \pi R\]
\[f(Re^{i\theta})^p = i\left(\frac{\rho^2 + 1}{\rho^2 R^2 e^{2i \theta}+ 4 \sin^2\left(\frac{Re^{i\theta}}{2}\right)} - \frac{1}{R^2 e^{2i\theta}}\right)^p \]
\[= \frac{i}{R^{2p}}\left(\frac{\rho^2 + 1}{\rho^2  e^{2i \theta}+ \frac{4\sin^2\left(\frac{Re^{i\theta}}{2}\right)}{R^2} } - \frac{1}{ e^{2 i\theta}}\right)^p \]
\[ \lim_{R \to \infty}sup_{\theta\in[0,\pi]}(O(R^{-2p})). \pi R\]
\[\lim_{R \to \infty}\left|\int_{\gamma_2} f(z)^p 
\;
dz \right | \leq 0\]
hence ,
\[\int_{-\infty}^{\infty} M_\rho(t)^p \, dt = \lim_{R \to \infty}2 \pi i \sum_{n = 1}^{N} Res_{z_{n^+}}f^p\]
from the bound we created earlier we can state that N is related to R as $N(R)$but here all the poles will always lie inside the contour.
\begin{equation}\label{eq:residuesum}
    \lim_{R \to \infty}2 \pi i \sum_{n = 1}^{N(R)} Res_{z_{n^+}}f^p
\end{equation}

%------------------------------------------------------------
\section{Laurent Expansion and Residue Computation}\label{sec:residues}
 
Fix $n \geq 1$ and set $w = z - z_{n^+}$ so that the pole is at $w = 0$. We expand $f(z)^p$ in a Laurent series about $w = 0$ and extract the $w^{-1}$ coefficient.
 
\subsection{Expansion of the Denominator}
 
Writing $z = w + z_n$ with $z_n = 2n\pi(1+i\rho)$ and using $\sin\!\tfrac{z}{2} = \sin\!\tfrac{w+z_{n^+}}{2}$, the denominator of $f(z)$ expands near $w = 0$ as follows.
After substituting the angle addition formula and using $\sin(n\pi) = 0$, $\cos(n\pi) = (-1)^n$, and $\sinh(n\pi\rho) \sim n\pi\rho$ for $\rho \to 0$, the dominant terms are
\[
  \rho^2 z^2 + 4\sin^2\!\frac{z}{2} \;\sim\; 4\sin^2\!\frac{w}{2} + 4\pi ni\rho\sin w.
\]
Applying the Taylor approximations $\sin\!\tfrac{w}{2} \sim \tfrac{w}{2}$ and $\sin w \sim w$ near $w = 0$:
\[
  \sim \; w^2 + 4\pi ni\rho\, w \;=\; w\bigl(w + 4\pi ni\rho\bigr).
\]
 
\subsection{Expansion of the Numerator}

The numerator of $f(z)$ at $z = z_n$ is
\[
C := \frac{z_{n^+}^2 - 4\sin^2\left(\frac{z_{n^+}}{2}\right)}{z_n^2}
= 1 - \frac{4\sin^2\left(\frac{z_{n^+}}{2}\right)}{z_{n^+}^2}.
\]

Using the identity
\[
\sin\left(\frac{z_{n^+}}{2}\right)
= i(-1)^n \sinh(\pi n \rho),
\]
we obtain
\[
\sin^2\left(\frac{z_{n^+}}{2}\right)
= -\sinh^2(\pi n \rho).
\]

Hence,
\[
C = 1 + \frac{4\sinh^2(\pi n \rho)}{z_{n^+}^2}.
\]

Now using
\[
z_{n^+} = 2\pi n (1 + i\rho),
\quad \text{so that} \quad
z_{n^+}^2 = 4\pi^2 n^2 (1 + i\rho)^2 = 4\pi^2 n^2 (1 + 2i\rho - \rho^2),
\]
we obtain
\[
C = 1 + \frac{4\sinh^2(\pi n \rho)}{4\pi^2 n^2 (1 + 2i\rho - \rho^2)}
= 1 + \frac{\sinh^2(\pi n \rho)}{\pi^2 n^2 (1 + 2i\rho - \rho^2)}.
\]

Expanding $\sinh(\pi n \rho)$ for small $\rho$,
\[
\sinh(\pi n \rho) = \pi n \rho + O(\rho^3),
\]
so that
\[
\sinh^2(\pi n \rho) = \pi^2 n^2 \rho^2 + O(\rho^4).
\]

Therefore,
\[
C = 1 + \frac{\pi^2 n^2 \rho^2 + O(\rho^4)}{\pi^2 n^2 (1 + 2i\rho - \rho^2)}
= 1 + \frac{\rho^2}{1 + 2i\rho - \rho^2} + O(\rho^4).
\]

Finally, expanding the denominator,
\[
\frac{1}{1 + 2i\rho - \rho^2}
= 1 - 2i\rho + O(\rho^2),
\]
we obtain
\[
C = 1 + \rho^2(1 - 2i\rho + O(\rho^2)) + O(\rho^4)
= 1 + O(\rho^2).
\]

In particular,
\[
C \to 1 \quad \text{as } \rho \to 0.
\]
 
{Expansion and computing residue of $f^p$}
 
Near $w = 0$, combining numerator and denominator:
\[
  f(z) \;\sim\; \frac{C}{w \cdot (4\pi ni\rho + w + O(w^2))}.
\]
\[
\operatorname{Res}_{w=0} \frac{C^p}{w^p (w + 4\pi n i \rho)^p}
=
\frac{1}{(p-1)!}
\left.
\frac{d^{p-1}}{dw^{p-1}}
\left[
\frac{C^p}{(w + 4\pi n i \rho)^p}
\right]
\right|_{w=0}
\]
\[
=
\frac{C^p}{(p-1)!}
\cdot
(-1)^{p-1}
\frac{(2p-2)!}{(p-1)!}
\cdot
\frac{1}{(4\pi n i \rho)^{2p-1}}
\]
\[
\operatorname{Res}_{w=0}
=
(-1)^{p-1} C^p
\frac{(2p-2)!}{((p-1)!)^2}
\cdot
\frac{1}{(4\pi n i \rho)^{2p-1}}
\]
\[
\operatorname{Res}_{w=0}
=
(-1)^{p-1} C^p
\binom{2p-2}{p-1}
\frac{1}{(4\pi n i \rho)^{2p-1}}
\]
Since $C = 1 + O(\rho^2)$, we have $C^p = 1 + O(\rho^2)$, 
which does not affect the leading order of the residue.
\begin{equation}\label{eq:residue11}
\operatorname{Res}_{w=0}
=
(-1)^{p-1} 
\binom{2p-2}{p-1}
\frac{1}{(4\pi n i \rho)^{2p-1}}
\end{equation}
remember from the $\eqref{eq:poles}$ we have the symmetric poles which contribute to the residue symmetrically. hence the final residue would be
\begin{equation}\label{eq:residue1}
\operatorname{Res}_{w=0}
=
(-1)^{p-1} 
\binom{2p-2}{p-1}
\frac{2}{(4\pi n i \rho)^{2p-1}}
\end{equation}
 
%------------------------------------------------------------
\section{Summation and Final Asymptotics}\label{sec:final}
 
Substituting~\eqref{eq:residue1} into~\eqref{eq:residuesum} and summing over $n \geq 1$:
\[\int_{-\infty}^{\infty} M_\rho(t)^p dt \sim \lim_{R \to \infty}2 \pi i \sum_{n = 1}^{N(R)} Res_{z_{n^+}}f^p\]
\[
\int_{-\infty}^{\infty} M_\rho(t)^p dt 
\sim (-1)^{p-1} 
\binom{2p-2}{p-1}
\frac{4\pi i}{(4\pi  i \rho)^{2p-1}}
\lim_{R \to \infty}\sum_{n=1}^{N(R)} \frac{1}{n^{2p-1}}.
\]

The leading order behavior is governed by $n\lesssim\frac{1}{\rho}$ , as stated earlier  in \eqref{sec:contour} , for $2p-1 > 1$ , the series converges , the tail for $n > \frac{1}{\rho}$ contributes negligibly because
\[\sum_{n >\frac{1}{\rho}} \frac{1}{n^{2p-1}} \leq \frac{1}{N^{2p-1}}\] where $N \sim \frac{1}{\rho}$ so , 
\[\sum_{n> \frac{1}{\rho}}\frac{1}{n^{2p-1}} \leq O(\rho^{2p-1})\]
Hence the tail contribution is negligible relative to the leading order.
\[
\int_{-\infty}^{\infty} M_\rho(t)^p dt 
\sim (-1)^{p-1} 
\binom{2p-2}{p-1}
\frac{4 \pi i}{(4\pi  i \rho)^{2p-1}} 
\sum_{n=1}^{\infty} \frac{1}{n^{2p-1}}.
\]
Thus,
\[
\int_{-\infty}^{\infty} M_\rho(t)^p dt 
\sim \frac{A_p}{\rho^{2p-1}},
\]
where 
\[
A_p
=
\binom{2p-2}{p-1}
\frac{1}{(4\pi)^{2p-2}}
\zeta(2p-1)
\]
for an explicit constant $A_p$. Taking the $p$-th root:
\begin{equation}\label{eq:main-result}
  \boxed{I_p(\rho) = \left(\int_{-\infty}^{\infty} M_\rho(t)^p\,dt\right)^{1/p} \sim C_p\,\rho^{-(2-1/p)}}
\end{equation}
since $\tfrac{2p-1}{p} = 2 - \tfrac{1}{p}$. This completes the proof of Theorem~\ref{thm:main}. \qed
\begin{theorem}[Coiling Barrier]\label{thm:barrier}
Let $p > 1$ and $E < \infty$. Then any helix configuration with $I_p(\rho) \leq E$ 
must satisfy
\[
  \rho \geq c_p \, E^{-1/(2-1/p)}
\]
for an explicit constant $c_p > 0$ depending only on $p$. In particular, no sequence 
of configurations with uniformly bounded $L^p$ energy can have coiling pitch 
$\rho \to 0$. The helix cannot be coiled infinitely tightly within any finite-energy 
class.
\end{theorem}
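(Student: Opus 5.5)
The plan is to turn the asymptotic equivalence of Theorem~\ref{thm:main} into a two-sided inequality valid on a whole interval of small pitch, and then to treat large pitch by a separate compactness argument. The target is a uniform lower bound of the form
\[
I_p(\rho) \ge \kappa_p\, \rho^{-(2-1/p)} \qquad \text{for all } \rho \in (0,1].
\]
Once this is available the theorem follows in one line. If $I_p(\rho)\le E$, then $\kappa_p \rho^{-(2-1/p)} \le E$, so
\[
\rho \ge \kappa_p^{1/(2-1/p)} E^{-1/(2-1/p)}.
\]
We may therefore take $c_p=\min\{\kappa_p^{1/(2-1/p)},\,1\}$. The value $1$ covers the trivial case $\rho>1$ whenever $E\ge 1$; when $E<1$ the right-hand side is already larger than $1$, and one checks that this region is handled consistently by the choice of $\kappa_p$ given below.

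\textbf{Step 1 (small pitch).} By Theorem~\ref{thm:main} the ratio $I_p(\rho)\rho^{2-1/p}/C_p$ tends to $1$ as $\rho\to 0^+$. Hence there is a threshold $\rho_0(p)>0$ such that
\[
I_p(\rho)\ge \tfrac12 C_p\rho^{-(2-1/p)} \qquad \text{for } \rho\le\rho_0.
\]
Since $C_p>0$ is explicit, this already gives the bound near zero. To make the constant $c_p$ genuinely \emph{explicit}, rather than merely existent, I would not rely on the limit alone. Instead I would track the error terms in the residue computation: the factor $C^p=1+O(\rho^2)$, the higher-order $O(w^2)$ corrections to the denominator, and the tail $O(\rho^{2p-1})$ in the sum over $n$. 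Making these quantitative yields a concrete $\rho_0$.

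\textbf{Step 2 (moderate pitch).} On the compact interval $[\rho_0,1]$ the map $\rho\mapsto I_p(\rho)$ is continuous, by dominated convergence applied to $M_\rho(t)^p$. It is also strictly positive, because $M_\rho>0$ for $t\neq 0$. Therefore it attains a positive minimum $m_p$ on this interval. Setting
\[
\kappa_p=\min\{\tfrac12 C_p,\ m_p\rho_0^{2-1/p}\}
\]
gives the uniform lower bound on all of $(0,1]$. The qualitative statements in the theorem then follow at once: a sequence of helices with $\rho_k\to0$ has $I_p(\rho_k)\to\infty$, so no sequence with uniformly bounded energy can coil infinitely tightly.

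\textbf{Main obstacle.} The hard part is the explicitness in Step 1. The Laurent-expansion argument of Sections~\ref{sec:residues}--\ref{sec:final} is only a leading-order computation, so I must bound the remainder uniformly in $n$. This includes the regime $n\gtrsim 1/\rho$, where the pole approximation $z_n\approx 2n\pi(1+i\rho)$ degrades.

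My first attempt would avoid residues entirely and prove a direct real-variable lower bound. Near each $t=2n\pi$ with $n\le 1/\rho$, write $t=2n\pi+s$. For $|s|\le \rho n$ the denominator satisfies
\[
\rho^2t^2+4\sin^2(s/2)\lesssim \rho^2 n^2,
\]
so there $M_\rho(t)\gtrsim (\rho n)^{-2}$, after subtracting $1/t^2\sim n^{-2}$, which is smaller by a factor of $\rho^2$. Integrating the $p$-th power over this window gives a contribution $\gtrsim (\rho n)^{1-2p}$. Summing over $1\le n\le 1/\rho$ then yields
\[
\int M_\rho^p \gtrsim \rho^{1-2p}\sum_{n\ge1} n^{1-2p},
\]
with fully explicit constants. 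This elementary bound suffices for the barrier and sidesteps the delicate error analysis of the asymptotic formula.
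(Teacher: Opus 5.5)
\textbf{Gap: the regime $\rho>1$ with small $E$.} Your Step 1 is exactly the paper's proof: the main theorem gives $I_p(\rho)\ge\frac12 C_p\rho^{-(2-1/p)}$ on $(0,\rho_0]$, and you rearrange. Your Step 2 together with $c_p=\min\{\kappa_p^{1/(2-1/p)},1\}$ correctly settles every case with $E\ge 1$.

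The proof breaks at the sentence claiming the region $\rho>1$, $E<1$ ``is handled consistently by the choice of $\kappa_p$''. $\kappa_p$ only controls $\rho\in(0,1]$, and nothing in your argument bounds $I_p$ from below for $\rho>1$. In fact nothing can. Set $s(t)=4\sin^2(t/2)/t^2\in[0,1]$. Then one has the exact identity
\[
M_\rho(t)=\frac{1-s(t)}{t^2\bigl(\rho^2+s(t)\bigr)}\le\frac{g(t)}{\rho^2},\qquad g(t):=\frac{1-s(t)}{t^2},
\]
with $0<g(t)\le\min\{1/12,\,t^{-2}\}$ for $t\neq0$. Hence $I_p(\rho)\le K_p\rho^{-2}$ for every $\rho>0$, where $K_p=\|g\|_{L^p}<\infty$.

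Now choose $E=K_p\rho^{-2}$. The barrier would force $\rho\ge c_pK_p^{-p/(2p-1)}\rho^{2p/(2p-1)}$, i.e.\ $\rho^{-1/(2p-1)}\ge c_pK_p^{-p/(2p-1)}$, which fails as $\rho\to\infty$. Large-pitch (nearly straight) helices have energy of order $\rho^{-2}$. Since $1/(2-1/p)>1/2$, they violate $\rho\ge c_pE^{-1/(2-1/p)}$ once $E$ is small, for any $c_p>0$.

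So the statement, with a single $c_p$ valid for all $E<\infty$, is false. What your argument actually proves is the barrier for $E\ge E_0(p)$ (with your constants, $E\ge 1$), or equivalently for $\rho\le 1$ and all $E$. The paper's proof has the same limitation, hidden in the phrase ``for all sufficiently large $E$'' when it treats $\rho>\rho_0$. You should state that restriction explicitly instead of asserting consistency. The qualitative conclusion, that no bounded-energy family has $\rho\to0$, is unaffected.

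Your elementary real-variable lower bound is a genuine improvement over the paper, which leans on the heuristic residue asymptotics. The identity above makes it uniform on all of $(0,1]$. On $|t-2\pi|\le\rho$ one has $s(t)\le\rho^2/(2\pi-1)^2$ and $t\le 2\pi+1$, so $M_\rho\ge c\,\rho^{-2}$ with an explicit $c$. This gives $I_p(\rho)\ge 2^{1/p}c\,\rho^{-(2-1/p)}$ for all $\rho\le 1$, with no need for the non-explicit compactness minimum $m_p$.
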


\begin{proof}
By Theorem~\ref{thm:main}, there exists $\rho_0 > 0$ and $C_p > 0$ such that 
$I_p(\rho) \geq \tfrac{1}{2}C_p\,\rho^{-(2-1/p)}$ for all $\rho \leq \rho_0$.
If $\rho > \rho_0$ the bound $\rho \geq c_p E^{-1/(2-1/p)}$ holds trivially for 
all sufficiently large $E$ by taking $c_p$ small enough. For $\rho \leq \rho_0$, 
the condition $I_p(\rho) \leq E$ gives
\[
  \tfrac{1}{2}C_p\,\rho^{-(2-1/p)} \leq E,
\]
which rearranges to $\rho \geq \bigl(\tfrac{C_p}{2E}\bigr)^{1/(2-1/p)} 
=: c_p\,E^{-1/(2-1/p)}$. \qedhere
\end{proof}

%------------------------------------------------------------
\section{Extension to Real $p$}
\begin{theorem}
    For any real $p > 1$, the $L^p$ integral of the M\"obius energy density satisfies the exact asymptotic scaling:
   \[
  I_p(\rho) \sim C_p \, \rho^{-(2 - 1/p)}, \qquad{p \in \mathbb{R}}
\]
This establishes that the scaling exponent proven in Theorem 1 for integers extends continuously to all real p $>$ 1.

\end{theorem}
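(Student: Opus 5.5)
The plan is to abandon contour integration, since for non-integer $p$ the function $f(z)^p$ is multivalued and the residue computation of Section~\ref{sec:residues} has no direct meaning. Instead I would prove the statement by real-variable localisation around the near-singular points $t = 2n\pi$. In the real-variable picture, the mechanism behind Theorem~\ref{thm:main} is that near $t = 2n\pi$, with $s = t - 2n\pi$, we have $4\sin^2(t/2) = s^2 + O(s^4)$ and $\rho^2 t^2 \approx (2n\pi\rho)^2$. So $M_\rho$ looks like a Lorentzian peak
\[
M_\rho(2n\pi + s) \approx \frac{1}{a_n^2 + s^2}, \qquad a_n := 2n\pi\rho .
\]
The $L^p$ mass of such a peak is explicit for every real $p>1/2$:
\[
\int_{\mathbb{R}} (a^2+s^2)^{-p}\,ds = a^{1-2p}\,\frac{\sqrt{\pi}\,\Gamma(p-\tfrac12)}{\Gamma(p)} .
\]
Summing over $n \ge 1$, and doubling for $n \le -1$ by evenness of $M_\rho$, predicts
\[
\int_{\mathbb{R}} M_\rho^p\,dt \sim 2\,\frac{\sqrt{\pi}\,\Gamma(p-\tfrac12)}{\Gamma(p)}\,(2\pi\rho)^{1-2p}\,\zeta(2p-1).
\]
This constant is analytic in $p$ and, for integer $p$, reduces through $\sqrt{\pi}\,\Gamma(p-\tfrac12)/\Gamma(p) = \pi\binom{2p-2}{p-1}4^{1-p}$ to a rational multiple of the binomial expression in Theorem~\ref{thm:main}. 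I would use it as the definition of $C_p^p$ for real $p$. I would also recheck the normalisation against Theorem~\ref{thm:main} (factors of $2$ from the symmetric branch), because my bookkeeping gives $(4\pi)^{2-2p}$ up to a factor that should be verified.

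The key steps, in order, are as follows. Fix a small $\delta \in (0,\pi)$ and split $\mathbb{R}$ into three pieces:
\begin{itemize}
\item[(i)] peak windows $W_n = \{|t - 2n\pi| < \delta\}$ for $1 \le |n| \le K/\rho$, with $K$ a large constant;
\item[(ii)] the complement of these windows inside $|t| \le 2\pi K/\rho$;
\item[(iii)] the far region $|t| > 2\pi K/\rho$.
\end{itemize}
On (iii) I would use $0 \le M_\rho(t) \le (1+\rho^2)/(\rho^2 t^2)$, which gives a contribution $O(\rho^{-2p}(\rho/K)^{2p-1}) = O(K^{1-2p}\rho^{-1})$. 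On (ii), $\sin^2(t/2) \gtrsim \delta^2$, so $M_\rho = O_\delta(1)$, and this region contributes $O_\delta(1/\rho)$. Both are $o(\rho^{1-2p})$ because $2p - 1 > 1$. This is exactly where $p>1$ enters.

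On each window $W_n$ I would rescale $s = a_n u$. The rescaled integrand is then
\[
a_n^{2p}\,M_\rho(2n\pi + a_n u)^p = \bigl(1 + O(\delta^2) + O(\rho)\bigr)^p (1+u^2)^{-p} + O\bigl(a_n^{2p}\,\cdot\,\text{(small)}\bigr),
\]
uniformly in $|u| < \delta/a_n$. The errors come from three sources: $4\sin^2(s/2)$ versus $s^2$ (relative error $O(\delta^2)$); $t^2$ versus $(2n\pi)^2$ (relative error $O(\delta/n)$); and the subtracted term $1/t^2 \le 1/(2n\pi-\delta)^2$, which is $O(a_n^2/n^2) = O(\rho^2)$ relative to the peak height $a_n^{-2}$.

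Dominated convergence in $u$ then gives
\[
\int_{W_n} M_\rho^p = a_n^{1-2p}\,\frac{\sqrt{\pi}\,\Gamma(p-\tfrac12)}{\Gamma(p)}\,\bigl(1 + O(\delta^2) + o(1)\bigr).
\]
Summing over $n \le K/\rho$ yields $\zeta(2p-1)$ minus a tail of size $O((K/\rho)^{2-2p})$, which is negligible. Finally I let $\rho \to 0$, then $\delta \to 0$, then $K \to \infty$, and take $p$-th roots.

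The main obstacle will be the uniformity of the window approximation for the larger $n$, those with $n\rho$ of order $K$. There $a_n$ is no longer small compared to $\delta$, so the peak is not well separated and the Lorentzian picture degrades. I would handle this by restricting the asymptotic analysis to $n \le \eta/\rho$ with $\eta \to 0$ slowly. The windows with $\eta/\rho < n \le K/\rho$ would then be bounded crudely by $M_\rho \le C\,a_n^{-2}$ together with width $\delta$. Their total is $O\bigl(\sum_{n>\eta/\rho} a_n^{-2p}\bigr) = O(\rho^{-2p}(\eta/\rho)^{1-2p}) = O(\eta^{1-2p}\rho^{-1})$, which is still $o(\rho^{1-2p})$. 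Checking that this crude bound holds uniformly, and that the tail of the sum $\sum_n a_n^{1-2p}$ is negligible, is where care is needed.
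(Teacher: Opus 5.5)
Your proposal is correct, and it obtains the upper bound differently from the paper. The paper's lower bound is essentially your window computation. It localises to $[2\pi n-\pi,\,2\pi n+\pi]$, replaces $M_\rho$ by $(\rho^2a_n^2+t^2)^{-1}$ with $a_n=2\pi n$, rescales, uses $\int_{\mathbb{R}}(1+u^2)^{-p}\,du=\sqrt{\pi}\,\Gamma(p-\tfrac12)/\Gamma(p)$, and sums to get $\zeta(2p-1)$.

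For the upper bound, the paper never estimates the off-peak region or the tail. It interpolates instead: it uses $\|M_\rho\|_p\le\|M_\rho\|_{p_0}^{1-\theta}\|M_\rho\|_{p_1}^{\theta}$ with $p_0=\lfloor p\rfloor$, $p_1=\lceil p\rceil$, $1/p=(1-\theta)/p_0+\theta/p_1$, together with Theorem~\ref{thm:main} at the integer endpoints. This is short and reuses the contour computation, but it has two limitations:
\begin{itemize}
\item The constant it actually produces is $C_{p_0}^{1-\theta}C_{p_1}^{\theta}$, not $C_p$. Combined with the lower bound, it only gives $I_p(\rho)\asymp\rho^{-(2-1/p)}$, not the sharp asymptotic.
\item For $1<p<2$ the lower endpoint is $p_0=1$. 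Theorem~\ref{thm:main} does not cover that case, and there $I_1\sim\log(1/\rho)/\rho$, so the interpolated bound is off by a factor $(\log(1/\rho))^{1-\theta}$.
\end{itemize}
Your direct treatment avoids both problems. You bound the complement and the far region using $0\le M_\rho\le(1+\rho^2)/(\rho^2t^2)$, and you bound the windows with $n>\eta/\rho$ crudely. This gives matching upper and lower bounds with the sharp constant for every real $p>1$. It also shows that $p>1$ enters only through $2p-1>1$. Finally, it does not depend on the multivalued $f^p$ or on the residue approximations behind Theorem~\ref{thm:main}.

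Two small points. First, your normalisation is already exact. By the duplication formula, $\sqrt{\pi}\,\Gamma(p-\tfrac12)/\Gamma(p)=\pi\,4^{1-p}\,\Gamma(2p-1)/\Gamma(p)^2$. Hence $2\sqrt{\pi}\,\Gamma(p-\tfrac12)\Gamma(p)^{-1}(2\pi)^{1-2p}\zeta(2p-1)=\frac{\Gamma(2p-1)}{\Gamma(p)^2}(4\pi)^{2-2p}\zeta(2p-1)$. This is exactly $C_p^p$, not merely a rational multiple of it, once $\binom{2p-2}{p-1}$ is read as $\Gamma(2p-1)/\Gamma(p)^2$. There is no stray factor of $2$.

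Second, region (ii) contains $|t|<\delta$, where the bound $\sin^2(t/2)\gtrsim\delta^2$ fails. There you should instead use $M_\rho(t)=\frac{t^2-4\sin^2(t/2)}{t^2(\rho^2t^2+4\sin^2(t/2))}$. This is bounded uniformly in $\rho$, tending to $1/(12(1+\rho^2))$ as $t\to0$, so your $O_\delta(1/\rho)$ bound for region (ii) still holds.
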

\begin{proof}
   Taking $p \in \mathbb{R}$ be any arbitrary p $>$ 1 , where $p_0 = \lfloor p \rfloor$ and $p_1 = \lceil p\rceil$ from \eqref{eq:Ip} we can write 
    \[|| I_{p_0}|| \leq C_p\,\rho^{-(2-1/p_0)} \]
    \[|| I_{p_1}|| \leq C_p\,\rho^{-(2-1/p_1)} \]
    we have $p_0 < p < p_1$ using the log convexity if $L^p $ spaces we can write 
    \[ \frac{1}{p} = \frac{1 -\theta}{p_0} + \frac{\theta}{p_1}\]
    where $\theta \in (0,1)$
    using the Hölder's inequality we have 
    \[ || I_{p}||\leq|| I_{p_0}||^{(1-\theta)} \times || I_{p_1}||^\theta\]
    from here we have 
    \[|| I_{p}||\leq C_p \rho^{-(2-1/p_0)(1-\theta)} \rho^{-(2-1/p_1)\theta}  \]
    \[|| I_{p}||\leq C_p \rho^{-(2+ 2\theta-2\theta - \frac{1-\theta}{p_0}- \frac{\theta}{p_1})}  \]
    \[|| I_{p}||\leq C_p \rho^{-(2 - 1/p)} \]
    we have established an upper bound of the energy functional for the real p's. Examining the energy functional 
    \begin{figure}[h]
        \centering
        \includegraphics[width=0.5\linewidth]{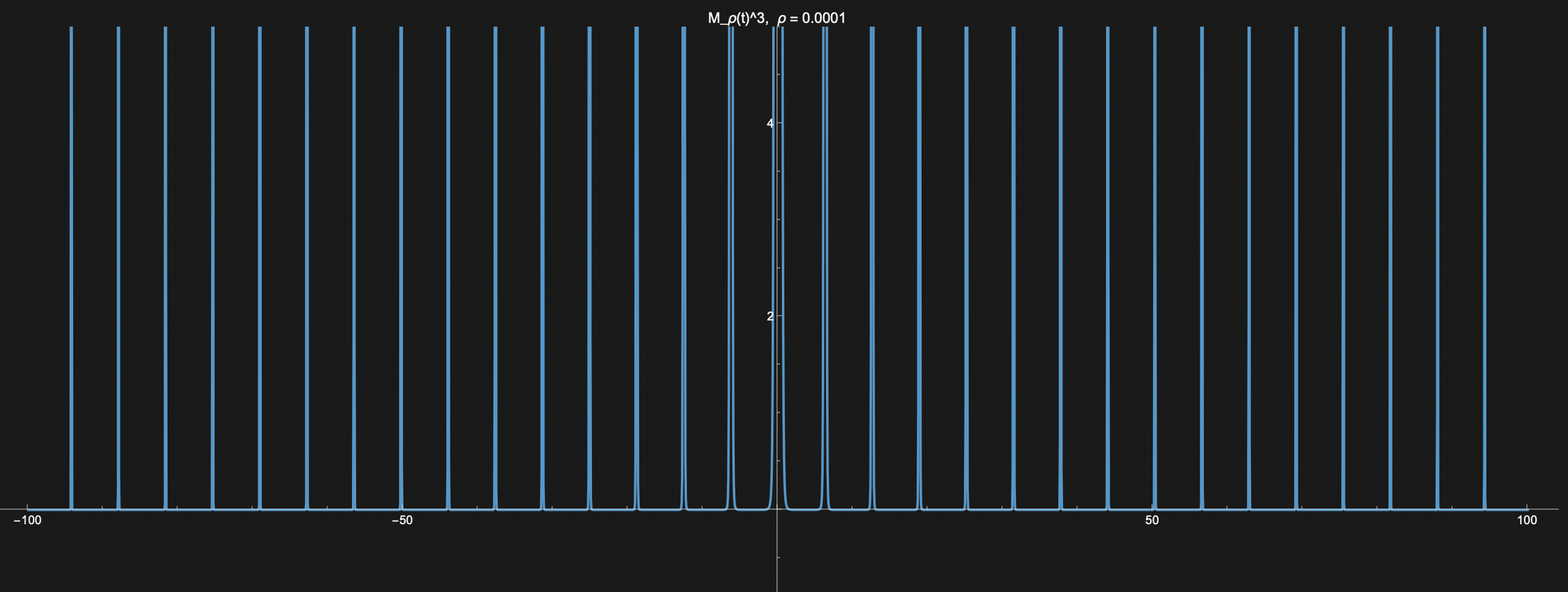}
        \caption{Plot of $M_\rho(t)^p$ for $p=3$ as a function of $t$, showing 
the periodic peak structure used in the lower bound construction.}
        \label{fig:placeholder}
    \end{figure}
    as $\rho \to 0^+$, we observe a periodic 
structure illustrated in Figure~2.

To construct a matching lower bound, we localize the integral around the poles (the peaks in the plot). As $\rho \to 0^+$, the dominant contributions occur near $a_n = 2\pi n$. We partition the domain by considering intervals $[a_n - \pi, a_n + \pi]$ around each pole. 

Shifting the integration variable by $t \mapsto t + a_n$, we define the localized function on $[-\pi, \pi]$:
\[
f_n(t) := M_\rho(t + a_n) = \frac{\rho^2+1}{\rho^2(t+a_n)^2 + 4\sin^2(t/2)} - \frac{1}{(t+a_n)^2}
\]
By the symmetry of the poles across the real axis, the total integral is bounded below by the sum over these localized positive periods:
\[
\int_{-\infty}^{\infty} M_\rho(t)^p \, dt \ge 2 \sum_{n=1}^{\infty} \int_{-\pi}^{\pi} f_n(t)^p \, dt
\]
On the interval $[-\pi, \pi]$, we apply the standard trigonometric bound $c t^2 \le 4\sin^2(t/2) \le t^2$. Substituting this bound allows us to approximate the integral over the shifted domain. For a given $n$, the integration over the dominant peak scales as:
\[
\int_{-\pi}^{\pi} f_n(t)^p \, dt = \int_{-\pi}^{\pi} \left(\frac{\rho^2+1}{\rho^2(t+a_n)^2 + 4\sin^2(t/2)} - \frac{1}{(t+a_n)^2}\right)^p \, dt
\]
\[
\int_{-\pi}^{\pi} f_n(t)^p \, dt  = \int_{-\pi}^{\pi} \left(\frac{\rho^2+1}{\rho^2(t+a_n)^2 + t^2} - \frac{1}{(t+a_n)^2}\right)^p \, dt
\]
Near each pole $a\_n = 2\pi n$, the localized function simplifies as follows. Setting $a = t + a_n$, we combine over a common denominator:
\[f_n(t) = \frac{a_n(a_n + 2t)}
         {(t+a_n)^2\bigl(\rho^2(t+a_n)^2 + t^2\bigr)}\]
         Since $|t| < \pi < 2n\pi$ 
         \[
         f_n(t) \sim \frac{1}{(\rho^2a_n^2+t^2)}
         \]
\begin{equation}
    \int_{-\pi}^{\pi} f_n(t)^p \, dt 
    \sim 
    \int_{-\pi}^{\pi} \frac{1}{(\rho^2 a_n^2 + t^2)^p} \, dt
\end{equation}

For each fixed $n \geq 1$, as $\rho \to 0$ the upper limit 
$\pi/\rho a_n \to \infty$, so:
\begin{equation}
   \frac{1}{(a_n\rho)^{2p-1}} \int_{-\pi/\rho a_n}^{\pi/\rho a_n} \frac{du}{(1+u^2)^p} 
    \longrightarrow   \int_{-\infty}^{\infty} \frac{du}{(1+u^2)^p} 
    =  \frac{1}{(a_n\rho)^{2p-1}} \frac{\sqrt{\pi}\,\Gamma\!\left(p - \tfrac{1}{2}\right)}{\Gamma(p)}
    =  \frac{1}{(a_n\rho)^{2p-1}} \frac{\pi}{4^{p-1}}\binom{2p-2}{p-1}
\end{equation}

Substituting $a_n = 2\pi n$ and factoring out the $\rho$ terms, 
we sum these localized bounds over all $n \geq 1$:
\begin{equation}
    \int_{-\infty}^{\infty} M_\rho(t)^p \, dt 
    \geq 
    2\sum_{n=1}^{\infty} C(\rho 2\pi n)^{1-2p} 
    = C'\rho^{1-2p} \sum_{n=1}^{\infty} \frac{1}{n^{2p-1}}
\end{equation}
Where C' is  the same as $ A_p\sim \eqref{sec:final}$ we computed.

Because $p > 1$, we have $2p - 1 > 1$, meaning the infinite series 
converges to the Riemann zeta function $\zeta(2p-1)$. This leaves us 
with a bounded constant multiplied by the scaling factor:
\begin{equation}
    \int_{-\infty}^{\infty} M_\rho(t)^p \, dt \geq c\rho^{1-2p}
\end{equation}

Taking the $p$-th root of both sides gives the exact lower bound 
for our norm:
\begin{equation}
    I_p(\rho) \geq c^{1/p}(\rho^{1-2p})^{1/p} 
    = c'\rho^{\frac{1}{p}-2} 
    = c'\rho^{-(2-1/p)}
\end{equation}

Combining the arguments from both sides completes the proof. \qed

\end{proof}
%------------------------------------------------------------
\section{Borderline Case p = 1}
 
For $p = 1$ the residue sum reduces to
\[
  \sum_{n=1}^{N} \frac{1}{n} \;\sim\; \log N,
\]
as we have stated, the earlier effective upper cutoff in $n$ is $N \sim 1/\rho$ (poles with $\operatorname{Im}(z_n) = 2n\pi\rho$ leave the width strip $O(1)$ when $n\rho \gtrsim 1$) The effective cutoff arises because poles with imaginary part 
$\operatorname{Im}(z_n) = 2\pi n \rho$ leave the strip $O(1)$ 
when $n\rho \gtrsim 1$, resulting in $N \sim 1/\rho$. Thus we obtain:
\begin{equation}
  I_1(\rho) \sim \frac{\log(1/\rho)}{\rho}.
\end{equation}
This logarithmic correction distinguishes the $p = 1$ case from the clean power law for $p > 1$.
 
%------------------------------------------------------------

\section{Discussion}
 
The main result (Theorem~\ref{thm:main}) establishes a clean family of power laws parameterized by $p$:
\[
  I_p(\rho) \sim C_p \rho^{-(2-1/p)}, \quad p > 1, \; p \in \mathbb{R}.
\]
Several features are worth noting.
\begin{itemize}
\item \textbf{Connection to Lipton's result.} Lipton \cite{lipton2026} 
establishes the asymptotic $I_1(\rho) \sim \log(1/\rho)/\rho$ as 
$\rho \to 0$ for the $L^1$ (arclength-rescaled) M\"obius energy density 
of a helix. Our result generalizes this to the full $L^p$ family: as 
$\rho \to 0$,
\[
I_p(\rho) \sim C_p \rho^{-(2-1/p)}, \quad p > 1.
\]
The exponent $2 - 1/p$ interpolates between the $p=1$ logarithmically-corrected blowup and the $p \to \infty$ quadratic blowup $\|M_\rho\|_{L^\infty} \sim \rho^{-2}$.

\item \textbf{Coiling barrier.} The energy blowup gives a quantitative lower 
bound on the pitch of any finite-energy configuration; see 
Theorem~\ref{thm:barrier}.
 
  \item \textbf{Interpolation.} As $p \to 1^+$ the exponent 
$2 - 1/p \to 1$, and the power law transitions to the logarithmically 
corrected $I_1 \sim \log(1/\rho)/\rho$, consistent with the harmonic 
series divergence in Lipton's proof \cite{lipton2026}.
 
  \item \textbf{Large $p$.} As $p \to \infty$, the exponent $2 - 1/p \to 2$, suggesting $\|M_\rho\|_{L^\infty} \sim \rho^{-2}$, consistent with the pole heights $|f(z_n)| \sim \rho^{-2}$ for $n = O(1)$.
\begin{table}[H]
\centering
\caption{Summary of Coefficients(for $\rho$ = $10^{-3}$)}
\begin{tabular}{@{}llll@{}}
\toprule
$p$ & $I_p$ (numerical) &$C_p \times\rho^{exp}$  & Error (\%) \\ \midrule
2   & 3901.65809            & 3901.82374            & 0.0042 \%    \\
3   & 6295.35922            & 6295.35315            & 0.0001\%    \\
10  & 15491.88126           & 15491.86926           & 0.0001\%    \\
2.5 & 5093.81276            & 5093.80712            & 0.0001\%    \\
4.2 &8845.88319             & 8845.87613            & 0.0001\%    \\\bottomrule
\end{tabular}
\end{table}
  \item   \textbf{Exponent Summary and Multi-$p$ Analysis}
The power law $I_p(\rho) \sim \rho^{-(2-1/p)}$ is verified via log-log regression across multiple orders of magnitude.
\end{itemize}

\begin{table}[H]
\centering
\caption{Summary of Scaling Exponents}
\begin{tabular}{@{}llll@{}}
\toprule
$p$ & Theoretical Exponent & Numerical Exponent & Error (\%) \\ \midrule
2   & -1.5000              & -1.4998            & 0.013\%    \\
3   & -1.6667              & -1.6666            & 0.006\%    \\
4   & -1.7500              & -1.7500            & 0.000\%    \\
10  & -1.9000              & -1.9000            & 0.000\%    \\ 
2.5 & -1.6000              & -1.5981            & 0.1170\%     \\
4.2 & -1.7619              & -1.7607            &0.0634\%     \\\bottomrule
\end{tabular}
\end{table}
\section{Acknowledgment}
The author would like to thank Atreyee Bhattacharya, Ajit Bhand, Armin Schikorra
 for their guidance, helpful discussions, and support 
during this work. The author also thanks Max Lipton  for 
introducing this problem, for helpful discussions throughout, Jun O'Hara for guidance and for facilitating the arXiv endorsement, and Yasuhiko Asao for kindly providing the endorsement. The author will be forever grateful to Rashmi for her constant support throughout the work.

\end{document}